\pdfoutput=1
\documentclass[11pt,a4paper]{article}
\usepackage[utf8]{inputenc}
\usepackage{amsmath,amssymb,amsthm}
\usepackage{booktabs}
\usepackage{longtable}
\usepackage[table,dvipsnames]{xcolor}
\usepackage{graphicx}
\usepackage{tikz}
\usetikzlibrary{shapes.geometric,arrows.meta,positioning,fit,backgrounds,calc}
\usepackage{geometry}
\usepackage[colorlinks=true,linkcolor=RoyalBlue,citecolor=BrickRed,urlcolor=RoyalBlue]{hyperref}
\usepackage{caption}
\newtheorem{theorem}{Theorem}
\newtheorem{proposition}{Proposition}

\theoremstyle{definition}
\newtheorem{definition}{Definition}

\newcommand{\ZZ}{\mathbb{Z}}
\newcommand{\NN}{\mathbb{N}}

\newcommand{\su}[1]{SU(#1)}
\newcommand{\so}[1]{SO(#1)}
\newcommand{\sy}[1]{Sp(#1)}

\definecolor{hdrblue}{RGB}{31,78,121}
\definecolor{softgrey}{RGB}{238,242,247}
\definecolor{warmred}{RGB}{178,34,34}

\newsavebox{\fichabox}
\newenvironment{ficha}[1]{%
  \par\medskip\begin{lrbox}{\fichabox}\begin{minipage}{0.93\textwidth}%
  {\normalsize\bfseries\color{hdrblue}#1}\par\medskip\small%
}{%
  \end{minipage}\end{lrbox}%
  \begin{center}\setlength{\fboxsep}{9pt}%
  \fcolorbox{hdrblue}{softgrey}{\usebox{\fichabox}}\end{center}\par\medskip%
}

\title{\textbf{\Large An Affine Semigroup from Orbifold Boundary Conditions}\\[7pt]
\large\textnormal{\itshape Cut, phylogenetic and hierarchical models in the unit-weight sector;\\ weighted orbifold configurations beyond them}\\[8pt]
\normalsize\textnormal{Part IX-B --- companion to \emph{The Alphabet of Orbifold Boundary
Conditions}}}
\author{Carles Mar\'in\thanks{Independent researcher, \texttt{karlesmarin@gmail.com}.
The computations were carried out and cross-checked with Claude (Anthropic) as an AI research
assistant against a common machine-verifiable ground truth; every number in this note regenerates
from the ancillary scripts, each of which writes its own receipt.}}
\date{\today}

\begin{document}
\maketitle

\begin{abstract}
\noindent
The equivalence classes of boundary conditions of a gauge theory on a two-dimensional orbifold are
the fibres of a marginal map, and are naturally indexed by the elements of an affine semigroup: one
generator per label of the orbifold's alphabet, graded by weight, embedded by its local data at the
fixed points. This note identifies that semigroup. When
the alphabet has no weights the configuration is one with a name and a literature: over $\ZZ_2$
it is the cut configuration of an explicit graph in the sense of Sturmfels and Sullivant, so that
its semigroup ring is the cut ring and its toric ideal the \emph{cut ideal} --- the four-cycle for
$T^2/\ZZ_2$ and the wheel $W_4$ for $S^1/\ZZ_2\times S^1/\ZZ_2$, verified here as an equality of
point configurations and not as a combinatorial resemblance --- over $\ZZ_m$, when the cone orders
are equal, it is the configuration of the group-based phylogenetic model on a claw tree, and when
they are not it is a mixed-order variant of that model which the literature does not seem to carry;
for the higher products it is the binary hierarchical model of the boundary complex of a
cross-polytope. Everything those frameworks already prove about our
cases is attributed here, including the fact that the ring of $S^1/\ZZ_2\times S^1/\ZZ_2$ is a row
of a 2008 table --- codimension, degree, minimal generators, normality --- every invariant of which
our machinery reproduced without knowing that table existed. What none of the three
covers is the alphabet \emph{with} weights, which arise from two distinct mechanisms: induction to
irreducibles of dimension greater than one of a space group that is not abelian, and recombination of
conjugate pairs when the defining space is real or quaternionic. That weighted sector --- which is
the one the physics needs, since the weighted labels are the non-diagonal boundary conditions ---
lies adjacent to --- but is not identified here with --- the non-abelian direction Sturmfels
and Sullivant raised in 2005 with a single computed data point, and is where this note's own contributions sit:
the gluing trees of the weighted alphabets and of the orthogonal and symplectic columns, and the
classification of the group-based model on the tripod, which for every finite abelian group is a
complete intersection exactly when the group has order at most three. The first group beyond
$\ZZ_3$ is also what separates local complete-intersection behaviour from the global toric
complete-intersection property: the $\ZZ_4$ tripod is a complete intersection on the Zariski-open
set the phylogenetics literature works in, and is not one globally.
\end{abstract}

\begin{ficha}{The note on one card}
\noindent
\textbf{The object.} One affine semigroup per orbifold and gauge group: one generator per label,
graded by weight, embedded by its local data at the fixed points (Definition~\ref{def:S}). The
alphabet is its minimal generating set in every case listed, which is a condition on the alphabet
once the weights differ and is verified rather than assumed.

\smallskip\noindent
\textbf{What it already is} (\S\ref{sec:dictionary}), and therefore what is \emph{not} claimed here:
over $\ZZ_2$ the cut configuration of a graph, whose toric
ideal is that graph's cut ideal~\cite{SScuts}; over $\ZZ_m$ with unit weights \emph{and equal cone
orders} the configuration of a group-based model~\cite{SSphylo}; for $(S^1/\ZZ_2)^k$ that of a
binary hierarchical model. The ring of the product orbifold is a row of Table 1
of~\cite{SScuts} --- codimension $7$, degree $64$, $8+8$ minimal generators, normal and
Cohen--Macaulay --- and that the degree-$2$ moves cannot suffice there is their Corollary 3.3(2)
together with Engstr\"om's theorem~\cite{Engstrom}.

\smallskip\noindent
\textbf{What is established here.}
\begin{itemize}\itemsep1pt \parskip0pt \topsep2pt
\item the identification itself, as an equality of point configurations (\S\ref{sec:dictionary});
\item thirteen gluing trees --- the unitary, orthogonal and symplectic realisations of every case ---
read as what \cite{SSphylo} says they are, exceptions (\S\ref{sec:gluings});
\item the group-based model on the tripod, for \emph{every} finite abelian group: a complete
intersection exactly when $|G|\le3$ (\S\ref{sec:tripod});
\item $(S^1/\ZZ_2)^k$ is not a complete intersection for any $k\ge3$, and is not a cut ideal either,
for a reason that is a count of edges (\S\ref{sec:allk});
\item the exceptional case as a list of invariants rather than a negation: a \emph{normal,
Cohen--Macaulay} toric ring of normalised volume $64$ that is \emph{not} a complete
intersection and has Cohen--Macaulay type $9$ (Table~\ref{tab:invariants},
Proposition~\ref{prop:type}); and that its minimal Markov basis is \emph{unique} up to
signs --- sixteen indispensable binomials in two orbits of eight, and the quartics exist
because of eight pairs of boundary conditions the quadrics cannot exchange
(Proposition~\ref{prop:eight});
\item and the weighted alphabets, which no framework above covers (\S\ref{sec:weights}).
\end{itemize}

\smallskip\noindent
\textbf{The distinction that matters most.} Casanellas, Fern\'andez-S\'anchez and
Micha{\l}ek~\cite{CFM} prove that these varieties are complete intersections \emph{in a Zariski open
set} --- their Corollary 3.6 says the binomials involved are a lattice basis, and a lattice-basis
ideal has as many generators as the codimension for any lattice at all. The question here is the
global one, about the toric ideal itself, and the two are not the same statement
(\S\ref{sec:localglobal}).
\end{ficha}

\section{The semigroup}\label{sec:semigroup}

The object of this note comes from gauge theory, and one paragraph is enough to say where; nothing
below uses any physics.

A gauge theory on a two-dimensional orbifold admits a discrete set of boundary conditions,
classified up to gauge equivalence. In the companion note~\cite{IXA} the classification is put in
the following form. The orbifold determines a finite \emph{alphabet}: a set of labels, each carrying
a positive integer \emph{weight} --- the dimension of an irreducible representation of the
orbifold's space group --- and each carrying a tuple of \emph{local data}: at each cone point, the
multiplicities with which the eigenvalues of the local rotation occur. A boundary condition of rank
$N$ is a multiset of labels of total weight $N$; its class is the sum of its labels' local data.
The division of labour between the two notes is clean enough to be worth stating: what is settled
there, by M\"obius inversion and Clifford theory, is which generators exist; what is settled here,
by toric and Markov-basis arguments, is which relations hold among them.

\begin{definition}\label{def:S}
Let $L=\{\ell_1,\dots,\ell_n\}$ be the alphabet, $w_i\in\ZZ_{>0}$ the weight of $\ell_i$ and
$v_i\in\NN^{c}$ its local data. Put $a_i=(w_i,v_i)\in\NN^{1+c}$ and let
$S=\langle a_1,\dots,a_n\rangle\subseteq\NN^{1+c}$ be the affine semigroup they generate. The number
of equivalence classes of rank $N$ is the number of elements of $S$ in degree $N$, so the
class-counting generating function is the Hilbert series $H(S,x)$.
\end{definition}

The lattice of relations $\mathcal L=\ker(\ZZ^n\to\ZZ^{1+c})$ has rank $n-\dim S$, and its elements
are the moves preserving every local datum.

\paragraph{Minimality.} The gluing criterion below is a statement about the \emph{minimal}
generating set, so $\{a_i\}$ must be one. For a weight-one alphabet that is immediate: a sum of two
generators has weight $2$ and no generator does. For a weighted alphabet it is a condition on the
alphabet rather than on the grading --- with weights $1,1,2$ three generators can be distinct and
still satisfy $a_3=a_1+a_2$ --- and it holds in each of the eleven configurations of
Table~\ref{tab:family} and in the three symplectic ones of Table~\ref{tab:gluings}, by exhaustive
search over the multisets of the right total weight: no
generator lies in the semigroup generated by the others. The search carries a decoy, the same
alphabet with the sum of two of its labels appended, which it reports as non-minimal and whose
redundant generator it names.

\begin{table}[h]
\centering\footnotesize
\begin{tabular}{llrrrl}
\toprule
\rowcolor{hdrblue}
\textcolor{white}{orbifold} & \textcolor{white}{group} & \textcolor{white}{$n$} &
\textcolor{white}{$\dim S$} & \textcolor{white}{rank $\mathcal L$} &
\textcolor{white}{$H(S,x)$}\\
\midrule
$S^1/\ZZ_2$ & $\su N$ & $4$ & $3$ & $1$ & $(1-x^2)/(1-x)^4$\\
\rowcolor{softgrey}
$T^2/\ZZ_2$ & $\su N$ & $8$ & $5$ & $3$ & $(1-x^2)^3/(1-x)^8$\\
$T^2/\ZZ_3$ & $\su N$ & $9$ & $7$ & $2$ & $(1-x^3)^2/(1-x)^9$\\
\rowcolor{softgrey}
$T^2/\ZZ_4$ & $\su N$ & $10$ & $8$ & $2$ & $(1-x^4)^2/\big[(1-x)^8(1-x^2)^2\big]$\\
$T^2/\ZZ_6$ & $\su N$ & $11$ & $9$ & $2$ &
$(1-x^6)^2/\big[(1-x)^6(1-x^2)^3(1-x^3)^2\big]$\\
\rowcolor{softgrey}
$T^2/\ZZ_4$ & $\su N$, weight $1$ & $8$ & $7$ & $1$ & $(1-x^4)/(1-x)^8$\\
$T^2/\ZZ_6$ & $\su N$, weight $1$ & $6$ & $6$ & $0$ & $1/(1-x)^6$\\
\rowcolor{softgrey}
$T^2/\ZZ_3$ & $\so N$ & $5$ & $4$ & $1$ & $(1-x^6)/\big[(1-x)(1-x^2)^4\big]$\\
$T^2/\ZZ_4$ & $\so N$ & $8$ & $6$ & $2$ & $(1-x^4)^2/\big[(1-x)^4(1-x^2)^4\big]$\\
\rowcolor{softgrey}
$T^2/\ZZ_6$ & $\so N$ & $8$ & $6$ & $2$ &
$(1-x^6)^2/\big[(1-x)^2(1-x^2)^3(1-x^3)^2(1-x^4)\big]$\\
\midrule
$S^1/\ZZ_2\times S^1/\ZZ_2$ & $\su N$ & $16$ & $9$ & $7$ &
$\dfrac{1+7x+20x^2+28x^3+7x^4+x^5}{(1-x)^9}$\\
\bottomrule
\end{tabular}
\caption{The family. The class counts of the first ten rows are in print in the physics
literature~\cite{HHK,KM,TI2,TI3}; the last row is the one the companion note~\cite{IXA} derives,
and is the only row of the table that is not a quotient by a cyclic rotation. The rows with a weight-$1$
alphabet are the ones \S\ref{sec:dictionary} identifies with objects already studied in
commutative algebra; the rows whose alphabet carries weights $2$, $3$ and $4$ are
\S\ref{sec:weights}.}
\label{tab:family}
\end{table}

\section{What this semigroup already is}\label{sec:dictionary}

Three identifications, in increasing order of how much they cost to state.

\paragraph{Over $\ZZ_2$: the cut configuration.} Sturmfels and Sullivant~\cite{SScuts} attach to a
graph $G$ the toric ideal whose coordinates are indexed by the cuts of $G$ --- the kernel of the map
from the polynomial ring to the semigroup ring of the cut configuration --- and its polytope
$\mathrm{Cut}^\square(G)$ is the convex hull of the cut vectors. For each of the three orbifolds of
Table~\ref{tab:family} whose cone orders are all $2$ the alphabet \emph{is} that vertex set, so the
semigroup here is their cut semigroup and its toric ideal is their cut ideal:

\begin{itemize}\itemsep2pt
\item $S^1/\ZZ_2$: a label is a parity under each of the two reflections, and those four sign
vectors are the four cuts of the path $P_3$;
\item $T^2/\ZZ_2$: a label is a sign at each of the four cone points with product $+1$, and those
eight sign vectors are exactly the eight cuts of the four-cycle $C_4$;
\item $S^1/\ZZ_2\times S^1/\ZZ_2$: a label is a sign under each of the four reflections
$P_0,P_1,Q_0,Q_1$, and the local datum is $N$, the four single counts and the four correlations
$\langle P_iQ_j\rangle$ --- which are the eight edges of a graph on five vertices, a hub joined to
each reflection and the four-cycle $P_0Q_0P_1Q_1$. That graph is the wheel $W_4$, the sixteen labels
are exactly its sixteen cuts, and the two point configurations are affinely isomorphic in both
directions.
\end{itemize}

\noindent
This is an equality of point configurations, checked in exact arithmetic, and not the observation
that two polytopes have the same face lattice.

\paragraph{Over $\ZZ_m$ with unit weights: a group-based model, and then not quite.} A label of
$T^2/\ZZ_m$ whose weight is $1$ is one root of unity per cone point with product the identity, and
the local datum is the indicator of each. When all the cone points have the same order that is, in
Fourier coordinates, the group-based phylogenetic model of $\ZZ_m$ on the claw tree $K_{1,l}$ with
$l$ the number of cone points~\cite{SSphylo}, whose codimension for a group of order $g$ is
$g^{\,l-1}-1-l(g-1)$; and it reproduces the ranks of Table~\ref{tab:family}, $3^2-1-3\cdot2=2$ for
$T^2/\ZZ_3$ and $2^3-1-4\cdot1=3$ for $T^2/\ZZ_2$.

The other two orbifolds are not that model, and the alphabet sizes say so at once: the weight-one
alphabet of $T^2/\ZZ_4$ has $8$ labels where a $\ZZ_4$ tripod would have $4^2=16$, and that of
$T^2/\ZZ_6$ has $6$ where a $\ZZ_6$ tripod would have $36$. The reason is that their cone orders are
mixed --- $(4,4,2)$ and $(6,3,2)$ --- so each leaf takes values in a different subgroup. What they
are is the configuration of
\begin{equation}\label{eq:mixed}
\boxed{\;\mathcal A(G;H_1,\dots,H_l)\;=\;\Big\{(g_1,\dots,g_l)\ :\ g_i\in H_i,\ \textstyle\sum_i
g_i=0\Big\},\;}
\end{equation}
with $H_i\le G$ the subgroup of order $m_i$ at the $i$-th cone point, embedded by the indicators as
before. Taking $H_i=G$ for every $i$ returns the group-based model; the orbifolds with mixed cone
orders return proper subgroups, one per leaf, and the orbifold decides which. We have not found
\eqref{eq:mixed} treated in the group-based literature, where a single group runs over all the
edges.

\paragraph{For the higher products: a binary hierarchical model.} On $(S^1/\ZZ_2)^k$ the ground set
is the $2k$ reflections, in $k$ antipodal pairs, and a fixed point is a choice of one reflection per
circle. So the marginals are indexed by the $2^k$ \emph{transversals}, which are the facets of the
boundary complex of the $k$-dimensional cross-polytope. Counting states and parameters is then
immediate and replaces the Kronecker argument of~\cite{IXA}:
\begin{equation}\label{eq:kcount}
\boxed{\;n=2^{2k}=4^k,\qquad \dim S=\#\{\text{faces}\}=\sum_{j}\binom{k}{j}2^{\,j}=3^k.\;}
\end{equation}
At $k=2$ the complex is the four-cycle and the model is the binary graph model of $C_4$, which
by~\cite[\S4]{SScuts} corresponds to the cut ideal of its suspension --- the wheel $W_4$ again, so
the two identifications agree where they overlap.

\section{What the identification buys, and what it costs}\label{sec:known}

It costs the binary cases, and they should be attributed plainly.

\paragraph{$T^2/\ZZ_2$ is their Example 1.2.} \cite{SScuts} compute $I_{C_4}$ and conclude that the
variety \emph{is a complete intersection of three quadrics}. That is the content of the $T^2/\ZZ_2$
row of \S\ref{sec:gluings}, and Table 1 of the same paper records its codimension $3$ and degree
$8$ --- the latter being $\sum h^*_i$ for the $h^*$-vector $(1,3,3,1)$ measured here.

\paragraph{The product orbifold is a row of their Table 1.} Their $\widehat{G}$ is the suspension of
$G$ over a point, so $\widehat{C_4}$ is $W_4$, and the row reads: $8$ minimal generators of degree
$2$, $8$ of degree $4$, codimension $7$, degree $64$, normal, Cohen--Macaulay, not Gorenstein. The
computations of this note return the same numbers by routes that do not communicate --- a minimal
Markov basis of $8+8$ elements, a relation lattice of rank $7$, and an $h^*$-vector
$(1,7,20,28,7,1)$ summing to $64$ --- which is a strong external check of the machinery: the
control was set by someone else, seventeen years earlier, and every invariant came out equal. What
their row records is the \emph{ring}. The class counts are the Ehrhart series of that ring with
respect to $L$, which the row does not print and \S\ref{sec:exception} computes; what 2008 settles
is that the object carrying them had already been identified and measured.

\paragraph{And the degree-$4$ moves are a corollary of theirs.} \cite[Cor.~3.3(2)]{SScuts}: a graph
with a $K_4$ minor has a minimal generator of degree $4$ in its cut ideal. $W_4$ has one. In the
other direction, Engstr\"om~\cite{Engstrom} proved their Conjecture 3.5: the cut ideal is generated
by quadrics if and only if the graph is free of $K_4$ minors. So the fact that on the product
orbifold moves of degree $2$ leave components uncovered and moves of degree $4$ close them is that
theorem applied to the graph the orbifold defines --- and the corresponding fact for $T^2/\ZZ_2$, whose
graph $C_4$ is $K_4$-minor free, is the same theorem in the other direction. What the orbifold
supplies is the graph.

\section{Thirteen gluings, read as exceptions}\label{sec:gluings}

Recall the criterion. For a partition $T=T_1\sqcup T_2$ of the minimal generating set into non-empty
parts, $S$ is the \emph{gluing} of $S_1=\langle T_1\rangle$ and $S_2=\langle T_2\rangle$ by $\alpha$
if $\alpha\in S_1\cap S_2$, $\alpha\neq0$, and $G(S_1)\cap G(S_2)=\ZZ\alpha$. Fischer, Morris and
Shapiro~\cite{FMS} prove that an affine semigroup which is not free is a complete intersection if
and only if it is a gluing of two complete intersections; this generalises Delorme's rank-one
theorem~\cite{Delorme}. Assi, Garc\'ia-S\'anchez and Ojeda~\cite{AGO} give the Hilbert series:
\begin{equation}\label{eq:ago}
\boxed{\;H(S_1+_\alpha S_2,\,x)\;=\;(1-x^{\deg\alpha})\,H(S_1,x)\,H(S_2,x).\;}
\end{equation}

Applying it to Table~\ref{tab:family} is a search over partitions with an exact test on each: the
lattice intersection by Hermite normal form and membership by reachability in the graded semigroup.

\begin{table}[h]
\centering\small
\begin{tabular}{llrlll}
\toprule
\rowcolor{hdrblue}
\textcolor{white}{orbifold} & \textcolor{white}{group} & \textcolor{white}{$n$} &
\textcolor{white}{gluing degrees} & \textcolor{white}{numerator degrees} &
\textcolor{white}{status}\\
\midrule
$S^1/\ZZ_2$ & $\su N$ & $4$ & $[2]$ & $[2]$ & binary, \cite{SScuts}\\
\rowcolor{softgrey}
$T^2/\ZZ_2$ & $\su N$ & $8$ & $[2,2,2]$ & $[2,2,2]$ & their Ex.~1.2\\
$T^2/\ZZ_3$ & $\su N$ & $9$ & $[3,3]$ & $[3,3]$ & group-based, $\ZZ_3$ tripod\\
\rowcolor{softgrey}
$T^2/\ZZ_4$ & $\su N$, w.\ $1$ & $8$ & $[4]$ & $[4]$ & group-based, mixed orders\\
$T^2/\ZZ_6$ & $\su N$, w.\ $1$ & $6$ & $[\,]$ & $[\,]$ & free, $\NN^6$\\
\rowcolor{softgrey}
$T^2/\ZZ_4$ & $\su N$ & $10$ & $[4,4]$ & $[4,4]$ & \textbf{weighted}\\
$T^2/\ZZ_6$ & $\su N$ & $11$ & $[6,6]$ & $[6,6]$ & \textbf{weighted}\\
\rowcolor{softgrey}
$T^2/\ZZ_3$ & $\so N$ & $5$ & $[6]$ & $[6]$ & \textbf{weighted}\\
$T^2/\ZZ_4$ & $\so N$ & $8$ & $[4,4]$ & $[4,4]$ & \textbf{weighted}\\
\rowcolor{softgrey}
$T^2/\ZZ_6$ & $\so N$ & $8$ & $[6,6]$ & $[6,6]$ & \textbf{weighted}\\
\midrule
\rowcolor{softgrey}
$T^2/\ZZ_3$ & $\sy N$ & $5$ & $[3]$ & $[3]$ & quaternionic\\
$T^2/\ZZ_4$ & $\sy N$ & $8$ & $[4,4]$ & $[4,4]$ & \textbf{weighted}\\
\rowcolor{softgrey}
$T^2/\ZZ_6$ & $\sy N$ & $8$ & $[6,6]$ & $[6,6]$ & \textbf{weighted}\\
\bottomrule
\end{tabular}
\caption{Thirteen complete intersections. The two right-hand degree columns are computed by routes
that do not communicate: the gluing degrees from lattice geometry, the numerator degrees from the
Hilbert series. The last column says which framework the row belongs to; seven rows are weighted,
and no framework in \S\ref{sec:dictionary} covers those.}
\label{tab:gluings}
\end{table}

\paragraph{The same alphabet, three realisations.} The last three rows complete a comparison the
table would otherwise only half make. The three orbifolds $T^2/\ZZ_3$, $T^2/\ZZ_4$ and $T^2/\ZZ_6$
each have one complex alphabet, realised in three ways with three different weight profiles and
three different dimensions:

\begin{center}\small
\begin{tabular}{llll}
\toprule
\rowcolor{hdrblue}
\textcolor{white}{orbifold} & \textcolor{white}{over $\su N$} & \textcolor{white}{over $\so N$} &
\textcolor{white}{over $\sy N$}\\
\midrule
$T^2/\ZZ_3$ & $9{\times}1$, \ $[3,3]$ & $1{\times}1+4{\times}2$, \ $[6]$ &
$5{\times}1$, \ $[3]$\\
\rowcolor{softgrey}
$T^2/\ZZ_4$ & $8{\times}1+2{\times}2$, \ $[4,4]$ & $4{\times}1+4{\times}2$, \ $[4,4]$ &
$6{\times}1+2{\times}2$, \ $[4,4]$\\
$T^2/\ZZ_6$ & $6{\times}1+3{\times}2+2{\times}3$, \ $[6,6]$ &
$2{\times}1+3{\times}2+2{\times}3+1{\times}4$, \ $[6,6]$ &
$4{\times}1+2{\times}2+2{\times}3$, \ $[6,6]$\\
\bottomrule
\end{tabular}
\end{center}

\noindent
Every one of the nine is a complete intersection, and in eight of the nine \emph{every} gluing
degree is the order $m$ of the rotation --- although the \emph{number} of gluings changes with the
type, since the dimension does: $T^2/\ZZ_3$ has two gluings over $\su N$ and one over $\sy N$. The
single row whose degree is not $m$ is $T^2/\ZZ_3$ over $\so N$, and it is the only one of the nine
whose alphabet carries a weight-one label among heavier ones.

\paragraph{These are exceptions, and the literature says so.} \cite[\S7]{SSphylo} state it flatly:
\emph{most models in algebraic statistics, including the group-based evolutionary models treated
there, are not complete intersections}. Table~\ref{tab:gluings} should therefore be read as a list
of the cases that survive, not as evidence that the family is well behaved --- and
\S\ref{sec:tripod} shows how quickly the surviving cases run out.

\paragraph{The gluing element is the physics.} In each case the $\alpha$ realising a gluing is,
read back, one of the equivalence relations printed in the physics source: the identity saying that
a collection of labels may be replaced by another with the same local data.

\paragraph{A degree that is not the order of the orbifold.} The thirteen rows carry twenty-one
gluings between them, and twenty of the twenty-one have degree $m$. The exception is the single
gluing of $T^2/\ZZ_3$ over $\so N$, of degree $6=2m$, and that row is the only one whose alphabet
has a weight-one label among heavier ones. The degree of a gluing is the weight of
its gluing element, and $(1-x^m)$ is the special case that occurs when all weights are $1$.

\begin{figure}[h]
\centering
\includegraphics[width=0.88\textwidth]{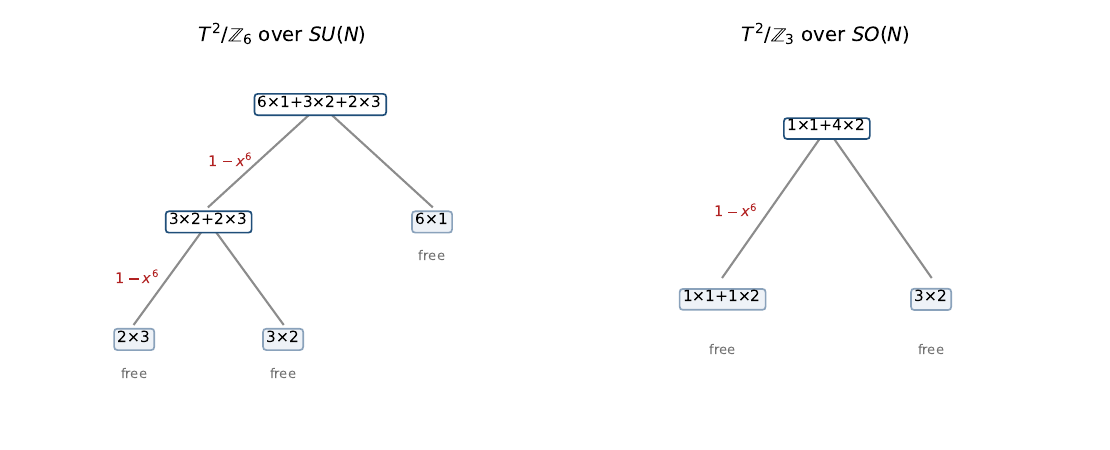}
\caption{Two gluing trees, as the search returns them. Each node is a set of generators, written by
its multiset of weights; each split is a gluing and contributes the factor on the branch
to~\eqref{eq:ago}; each leaf is a free semigroup. Both trees shown are weighted cases, so both are
outside the frameworks of \S\ref{sec:dictionary}.}
\label{fig:gluing}
\end{figure}

\section{The tripod, decided}\label{sec:tripod}

The smallest member of the group-based family is the claw tree with three leaves. For a cyclic group
$\ZZ_g$ its codimension is $(g-1)(g-2)$, and \cite{CFM} construct that many invariants of degree at
most $g$; whether the toric ideal is a complete intersection is a different question
(\S\ref{sec:localglobal}), and the gluing criterion answers it. One case is an orbifold of ours:
$g=3$ is $T^2/\ZZ_3$ over $\su N$.

\begin{proposition}\label{prop:tripod}
Let $G$ be a finite abelian group and let $S$ be the semigroup of the group-based model on the
tripod. Then $S$ is a complete intersection if and only if $|G|\le3$: it is free for $|G|=2$, a
complete intersection with two cubics for $|G|=3$, and not a complete intersection for $\ZZ_4$, for
$\ZZ_2\times\ZZ_2$, or for any abelian group of order at least $5$.
\end{proposition}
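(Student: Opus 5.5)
The plan is to treat the two directions separately. For $|G|\le3$ I will exhibit the structure directly. For $|G|\ge4$ I will show non-complete-intersection by counting minimal generators of the toric ideal $I$ against its codimension.

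\textbf{Set-up.} Write $g=|G|$. The configuration is \eqref{eq:mixed} with $l=3$ and $H_i=G$: the $n=g^2$ triples $(g_1,g_2,g_3)$ with $g_1+g_2+g_3=0$, each embedded by its three leaf indicators. All labels have weight $1$, so the alphabet is minimal and $I$ is homogeneous. The affine span has dimension $1+3(g-1)$. Hence the relation lattice has rank $r=g^2-3g+2=(g-1)(g-2)$, which is the codimension formula $g^{l-1}-1-l(g-1)$ of \S\ref{sec:dictionary} at $l=3$.

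\textbf{The cases $|G|\le3$.} For $g=2$, $r=0$, so $S\cong\NN^4$ is free. For $g=3$, the only group is $\ZZ_3$, and this is $T^2/\ZZ_3$ over $\su N$, whose gluing tree $[3,3]$ is already recorded in Table~\ref{tab:gluings}. By \eqref{eq:ago}, the Hilbert numerator $(1-x^3)^2$ of Table~\ref{tab:family} shows the two relations are cubics. The Fischer--Morris--Shapiro criterion \cite{FMS} then gives a complete intersection.

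\textbf{The obstruction for $|G|\ge4$.} A homogeneous complete intersection of codimension $r$ has exactly $r$ minimal generators. So it suffices to exhibit more than $r$ linearly independent minimal binomials in some low degree. The steps are as follows.

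\emph{Step 1: $I$ has no quadrics.} A degree-$2$ fibre is fixed by the three leaf multisets $\{a,a'\}$, $\{b,b'\}$, $\{c,c'\}$. Suppose $(a,b,c)+(a',b',c')$ equals a different pair of labels. That pair must be $(a,b',-a-b')+(a',b,-a'-b)$. Matching third coordinates then forces $a=a'$ or $b=b'$, in which case the two monomials coincide. Hence every degree-$2$ fibre is a singleton, and $I_2=0$.

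\emph{Step 2: count cubics.} Since $I_2=0$, every element of $I_3$ is a minimal generator. Therefore the number of cubic minimal generators equals
\[
\dim I_3=\binom{g^2+2}{3}-\#S_3 .
\]
I will compute $\#S_3$ by counting realisable triples of leaf multisets. Equivalently, I will lower-bound $\dim I_3$ by explicit cubic moves. For each triple of distinct $a,a',a''$ at leaf $1$ and a cyclic reassignment of the leaf-$2$ values, the third leaf is forced, and the move is valid whenever the leaf-$3$ multiset is preserved. For each fibre of size $s$, the moves contribute $s-1$ independent binomials. Summing over fibres gives a count growing roughly like $g^3$ or faster, which beats $r\sim g^2$.

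\emph{Step 3: small orders.} I will make Step 2 exact at $g=4$, where $r=6$, separately for $\ZZ_4$ and for $\ZZ_2\times\ZZ_2$. There I will list the degree-$3$ fibres, which is a finite check of the kind the paper already performs by reachability. I expect $\dim I_3>6$ in both cases. For $g\ge5$ I will use a uniform lower bound, for instance counting only the fibres built from three distinct elements at each leaf, since these exist as soon as $g\ge3$. The argument depends only on $|G|$ and the group law, not on cyclicity, so it covers every abelian group of order $\ge5$.

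\textbf{Main obstacle.} The hard step is Step 2: getting a clean, group-independent lower bound on $\dim I_3$ that already exceeds $(g-1)(g-2)$ at $g=5$, rather than only asymptotically. If the cubic count alone falls short for some small $g$, I will fall back on two alternatives. One is to compare the Hilbert series with any product $\prod_{i=1}^r(1-x^{d_i})/(1-x)^{n}$ having all $d_i\ge3$, using the coefficient in degree $4$. The other is to show directly via \cite{FMS} that no partition of the $g^2$ labels yields an intersection of groups equal to a rank-one lattice $\ZZ\alpha$.
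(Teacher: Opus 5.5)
\paragraph{Verdict.} Your route is sound and genuinely different from the paper's, but as written it stops where the content of the ``only if'' direction lies.

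\paragraph{The gap.} Steps~2 and~3 are plans, not arguments. ``I expect $\dim I_3>6$'' and ``a count growing roughly like $g^3$'' are never derived. The proposed uniform bound, counting fibres built from three distinct elements at each leaf ``since these exist as soon as $g\ge3$'', does not work as stated. The existence of such leaf multisets says nothing about whether their fibres contain a second monomial: over $\ZZ_5$, the identity matching of $\{0,1,2\}$ with itself is alone in its fibre.

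\paragraph{How to close it.} What is missing is the collision condition, and it is short.
\begin{itemize}
\item If $A$ or $B$ has a repeated entry, two monomials of the fibre share a label. Cancelling it reduces to your Step~1, so the fibre is a singleton.
\item If $A=\{a_1,a_2,a_3\}$ and $B$ are sets, the monomials over $(A,B)$ are the six bijections $A\to B$. Two bijections differing by a transposition never share $C$, again by Step~1.
\item The matching $a_i\mapsto b_i$ shares $C$ with $a_1\mapsto b_2$, $a_2\mapsto b_3$, $a_3\mapsto b_1$ if and only if $a_1-a_2=b_3-b_1$ and $a_2-a_3=b_1-b_2$.
\end{itemize}
Parametrise by $a_2,b_1\in G$ and $x=a_1-a_2$, $y=a_2-a_3$ with $x,y,x+y\neq0$. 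This gives $g^2(g-1)(g-2)$ ordered solutions. Each unordered colliding pair arises from exactly six of them: two choices of which monomial plays the identity, times three cyclic orderings. So there are $P=g^2(g-1)(g-2)/6$ colliding pairs.

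A fibre has at most three elements. One of size $s\in\{2,3\}$ contributes $s-1\ge\tfrac23\binom s2$ to $\dim I_3$, so
\[
\dim I_3\;\ge\;\tfrac23P\;=\;\frac{g^2(g-1)(g-2)}{9}\;>\;(g-1)(g-2)\qquad(g\ge4).
\]
At $g=3$ the bound is an equality and recovers the two cubics. A fibre of size three requires $x=y$ of order $3$. Neither group of order $4$ has such an element, so there $\dim I_3=P=16>6$ exactly.

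\paragraph{Comparison with the paper.} With this filled in, your argument is a real alternative to the paper's proof.
\begin{itemize}
\item The paper observes that every generator is a vertex of a product of three simplices, hence spans an extreme ray. It then applies the Fischer--Morris--Shapiro bound (at most $2\dim S-2$ extreme rays for a complete intersection), which disposes of $|G|\ge5$ in one line because $g^2>6g-6$.
\item That bound is silent at order $4$. There the paper settles $\ZZ_4$ and $\ZZ_2\times\ZZ_2$ by an exhaustive computer sweep of all $2^{15}-1$ partitions.
\item Your count of minimal cubics treats every abelian group of order at least $4$ uniformly and by hand, with no sweep and no extreme-ray theorem. It also gives more than a negation: the number of cubic minimal generators, of order $g^4/6$, against a codimension of order $g^2$.
\item The paper's route is shorter where it applies and needs no analysis of fibres.
\end{itemize}
For the ``if'' direction, both arguments take $\ZZ_3$ from the same exhaustive gluing computation. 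You should also dispose of $|G|=1$, where $S=\NN$ is free.
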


\begin{proof}
The labels are the triples of group elements summing to zero, so $n=|G|^2$, and the local datum is
the indicator of each entry, so $\dim S=3(|G|-1)+1=3|G|-2$ and the codimension is $(|G|-1)(|G|-2)$,
which is the value the phylogenetics literature prints for the tripod.

Every generator is an extreme ray, by the argument of Proposition~\ref{prop:allk}: it is a $0/1$
vector with one $1$ in each of three blocks, hence one of the vertices of a product of three
simplices, and a vertex stays extreme in the hull of any subset containing it. Nothing in that
argument uses the group law beyond the fact that the labels select a subset of those vertices,
so it holds unchanged for every finite abelian $G$ and not only for the cyclic ones ---
which is what lets the bound below be quantified over all of them. Theorem~\ref{thm:cor34}
therefore requires
\begin{equation}\label{eq:tripodbound}
\boxed{\;|G|^2\;\le\;2\dim S-2\;=\;6|G|-6,\qquad\text{that is}\qquad |G|^2-6|G|+6\le0,\;}
\end{equation}
which holds for $2\le|G|\le4$ and fails from $5$ on: writing $f(g)=g^2-6g+6$, one has $f(5)=1>0$ and
$f(g+1)-f(g)=2g-5>0$ for $g\ge3$, so $f$ is positive and increasing on the integers $g\ge5$.
The bound therefore settles every group of
order five or more; the trivial group is outside its reach for the other reason, $\dim S=1$ being
below the $d\ge2$ of Theorem~\ref{thm:cor34}, and its semigroup is $\NN$, free. That leaves four
groups open, of three orders: $\ZZ_2$, $\ZZ_3$, $\ZZ_4$ and
$\ZZ_2\times\ZZ_2$. Each of the four is decided by the gluing criterion, exhaustively --- the two of order $4$ over all
$2^{15}-1$ partitions of their sixteen generators --- with the results of Table~\ref{tab:tripod}.
\end{proof}

\begin{table}[h]
\centering\small
\begin{tabular}{lrrrrl}
\toprule
\rowcolor{hdrblue}
\textcolor{white}{$G$} & \textcolor{white}{$n=|G|^2$} & \textcolor{white}{$\dim S$} &
\textcolor{white}{codim} & \textcolor{white}{$2\dim S-2$} & \textcolor{white}{verdict}\\
\midrule
$\ZZ_2$ & $4$ & $4$ & $0$ & $6$ & free\\
\rowcolor{softgrey}
$\ZZ_3$ & $9$ & $7$ & $2$ & $12$ & complete intersection, degrees $[3,3]$\\
$\ZZ_4$ & $16$ & $10$ & $6$ & $18$ & not a complete intersection \ (swept)\\
\rowcolor{softgrey}
$\ZZ_2\times\ZZ_2$ & $16$ & $10$ & $6$ & $18$ & not a complete intersection \ (swept)\\
$|G|\ge5$ & $|G|^2$ & $3|G|-2$ & --- & $6|G|-6$ & not a complete intersection \
(\eqref{eq:tripodbound})\\
\bottomrule
\end{tabular}
\caption{The tripod, for every finite abelian group. The codimension is computed from the generator
matrix and agrees with $(|G|-1)(|G|-2)$, which is a control that could have failed. The bound is
silent exactly on the four small groups, which is why they are swept; and $\ZZ_2\times\ZZ_2$ is the
group of the Kimura 3-parameter model.}
\label{tab:tripod}
\end{table}

\noindent
We have looked for this classification in the group-based literature and not found it; what that
literature settles for the tripod is normality, and what it proves about complete intersections is
the local statement of \S\ref{sec:localglobal}. The reading matters as much as the statement. That
$T^2/\ZZ_3$ is a complete intersection is not a
property of tripods but of the order $3$: it stops at the next group, in both groups of order four,
and the extreme-ray bound closes everything above. It is the same shape as
\S\ref{sec:allk} --- a bound that decides all but the smallest cases, and an exhaustive sweep for
those --- and in both places the cases the bound cannot see are the interesting ones.

\section{Where the weights take it outside}\label{sec:weights}

A group-based model is abelian by construction: for the group-based phylogenetic models used
here, the Fourier transform of a finite abelian group is what puts the model in monomial
coordinates, in which it is toric. (Toric is of course far more general than that, and owes
nothing to abelian groups in general; the point is only how \emph{these} models arrive at it.)
Every framework of \S\ref{sec:dictionary} inherits that construction. Seven rows of
Table~\ref{tab:gluings} carry weights, and the weight is not a device: it is the dimension of an
irreducible, and the semigroup is graded by it. There is no Fourier transform to apply.

The weights have two distinct sources, and separating them is worth a sentence. Over $\su N$ they
come from \emph{induction}: a character of $\Lambda$ whose stabiliser is a proper subgroup of
$\ZZ_m$ induces an irreducible of dimension $>1$ of the space group $\Lambda\rtimes\ZZ_m$, which is
not abelian~\cite{IXA}. Over $\so N$ and $\sy N$ they come, in addition, from
\emph{recombination}: a complex irreducible that is not self-conjugate pairs with its conjugate into
a single real irreducible of twice the dimension, so weights appear even where the complex alphabet
had none. The two mechanisms are visible side by side in $T^2/\ZZ_3$: over $\su N$ every label has
weight $1$, and over $\so N$ the alphabet is $1{\times}1+4{\times}2$ with every weight $2$ produced
by recombination and none by induction.

\paragraph{One construction, four regimes.} Those two mechanisms and the mixed cone orders
of~\eqref{eq:mixed} are not three unrelated escapes from the dictionary. They are what one
construction produces as its inputs vary. The construction is the one of~\cite{IXA}: take the
irreducible representations of the space group whose restriction to the translations has non-trivial
stabiliser, weight each by its dimension, record at each cone point the multiset of eigenvalues of
the local rotation --- and then realify according to the type of the defining space. It has three
inputs, and each moves the configuration into a different regime:

\begin{center}\small
\begin{tabular}{lllll}
\toprule
\rowcolor{hdrblue}
\textcolor{white}{cone orders} & \textcolor{white}{stabilisers} & \textcolor{white}{type} &
\textcolor{white}{configuration} & \textcolor{white}{name}\\
\midrule
equal & all full & complex & flows on a claw tree & group-based model~\cite{SSphylo}\\
\rowcolor{softgrey}
equal, all $2$ & all full & complex & cuts of a graph & cut configuration~\cite{SScuts}\\
mixed & all full & complex & flows with $g_i\in H_i$ & \eqref{eq:mixed}, not located\\
\rowcolor{softgrey}
any & some proper & complex & \emph{weighted} flows & not located\\
any & any & real, quaternionic & weighted, recombined & not located\\
\bottomrule
\end{tabular}
\end{center}

\noindent
In that reading the general object is a \emph{weighted flow configuration}: the datum of a label at
the $i$-th cone point is no longer one element of $\widehat{H_i}$ but a multiset of $w$ of them, $w$
the weight, and the constraint that made the flows close is inherited from the representation rather
than imposed. Writing that down as a definition in the abstract is easy and empty --- every
configuration of non-negative integer vectors graded by a positive weight would satisfy it. The
content is in the other direction, and it is the question this note ends on:

\begin{quote}
\emph{Which weighted flow configurations arise from a group?} The four regimes above are the image
of one construction; a characterisation of that image would turn the last two rows from a list of
examples into a class, and the thirteen gluing trees of Table~\ref{tab:gluings} are the first
data on how that class behaves.
\end{quote}

This is not a direction nobody has thought of. It is one that was named and left open. Immediately
after the conjecture that the phylogenetic complexity $\varphi(G)$ of an abelian group is at most
$|G|$, \cite{SSphylo} write that the invariant \emph{``makes perfect sense for arbitrary groups not
just abelian groups. However, if $G$ is not abelian then the phylogenetic complexity can exceed the
group order. Using the software 4ti2, we found that $\varphi(S_3,2)\ge8$ for the symmetric group on
three letters. It would be interesting to study the group-theoretic meaning of this invariant.''}
That was 2005. The abelian side of that invariant has moved since --- Micha{\l}ek and Ventura prove
$\varphi(\ZZ_p)$ finite for every prime and $\varphi(\ZZ_3)=3$~\cite{MV}, and the Kimura
3-parameter case is settled in~\cite{MV2} --- while for a non-abelian group the single computed
data point is still the one of 2005.

Two things have to be said in the same breath. First, their $\varphi(G,n)$ is defined for a
\emph{finite} group on a claw tree, while $\Lambda\rtimes\ZZ_m$ is infinite and the alphabet here is
cut out of it by the condition that a character have non-trivial stabiliser: these are not the same
object and nothing below claims they are. Second, what the orbifolds do supply is a supply --- an
infinite family of weighted alphabets with a geometric origin, in which the weight of a label is the
index of the stabiliser of a fixed point, together with their gluing trees, Hilbert series and
degrees. The seven weighted rows of Table~\ref{tab:gluings} are the first entries of that list, and
in all three $\ZZ_6$ cases --- unitary, orthogonal and symplectic --- the gluing degree is $6$,
which is the order of the rotation.

\section{Local and global}\label{sec:localglobal}

One distinction has to be drawn explicitly, because the vocabulary collides. \cite{CFM} prove, under
the heading \emph{complete intersection for claw trees}, that the variety associated with a claw tree
is a complete intersection \emph{in the Zariski open set $U$}, where $U$ meets the variety in its
dense torus orbit. Their Corollary 3.6 says what the equations are: a set of Laurent binomials
defines the variety in $U$ if and only if the corresponding matrices generate the lattice
$\mathrm{adm}(G)$ --- that is, if and only if they are a lattice basis. A lattice-basis ideal has
exactly as many generators as the codimension, for every lattice; the content of their theorem is
the explicit construction and the degree bound, not the property.

The question in this note is the global one: whether the toric ideal itself --- the saturation ---
is generated by codimension-many elements, equivalently whether the semigroup is an iterated gluing.
The two statements are independent, and Table~\ref{tab:tripod} shows they differ: the $\ZZ_4$
tripod is a complete intersection in $U$ by their theorem and is not one globally.

\section{Every $k\ge3$, in one line}\label{sec:allk}

\begin{theorem}[{\cite[Cor.~3.4]{FMS}}]\label{thm:cor34}
Let $S$ be a $d$-dimensional affine semigroup which is a complete intersection, with $d\ge2$. Then
the cone of $S$ has at most $2d-2$ extreme rays.
\end{theorem}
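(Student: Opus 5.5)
The plan is to argue by induction on the number of gluings, using the Fischer--Morris--Shapiro characterisation recalled in \S\ref{sec:gluings}: an affine semigroup is a complete intersection exactly when it is free or is a gluing $S=S_1+_\alpha S_2$ of two complete intersections. We may assume $S$ is generated by its minimal generating set $T$, and that no generator is a torsion element. The quantity to track is $e(S)$, the number of extreme rays of $\mathrm{cone}(S)$, against $d(S)=\dim S$. The statement to be proved by induction is slightly stronger and covers $d=1$ as well: $e(S)\le\max(1,\,2d(S)-2)$. Free semigroups are simplicial, so $e=d$, and $d\le 2d-2$ holds as soon as $d\ge2$.

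For the gluing step, first record the dimension count. Since $G(S_1)\cap G(S_2)=\ZZ\alpha$ has rank one, the ranks satisfy
\begin{equation*}
d=d_1+d_2-1 .
\end{equation*}
Next, $\mathrm{cone}(S)=\mathrm{cone}(S_1)+\mathrm{cone}(S_2)$, so every extreme ray of the sum is an extreme ray of one of the summands, which gives $e\le e_1+e_2$. The naive estimate is then $e\le(2d_1-2)+(2d_2-2)=2d-2$. This closes the induction when both $d_i\ge2$.

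The main obstacle is the boundary case where one part has dimension $1$. Say $d_2=1$, so that $S_2$ lies on a ray and $\alpha\in S_1\cap S_2$ lies on that same ray. Then $\mathrm{cone}(S_2)$ is contained in $\mathrm{cone}(S_1)$ because $\alpha\in S_1$, so it contributes no new ray: $e=e_1\le 2d_1-2=2d-2$. If both $d_i=1$, then $d=1$ and the semigroup has a single ray, which the strengthened statement allows.

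More generally, I expect the sharpening needed in the $d_i\ge2$ case to come from the same observation. Because $\alpha$ lies in both cones, the ray through $\alpha$ cannot be extreme in both $\mathrm{cone}(S_1)$ and $\mathrm{cone}(S)$ from both sides unless it is shared. A shared ray is counted only once, and otherwise the count is still bounded by the naive sum. Either way the sum $e_1+e_2$ already meets the target $2d-2$, so no further loss needs to be absorbed.

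This is the step where I would check carefully that the inductive hypothesis is applied with the right base cases, namely $d_i=1$ giving $e_i=1$ rather than $2d_i-2=0$. Once that bookkeeping is done, the result follows for all $d\ge2$.
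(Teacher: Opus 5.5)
Your argument is correct. The paper gives no proof of its own and imports the statement as \cite[Cor.~3.4]{FMS}. Your induction over the gluing characterization quoted in \S\ref{sec:gluings} is the natural derivation of that corollary from the Fischer--Morris--Shapiro structure theorem: $d=d_1+d_2-1$ because $G(S_1)\cap G(S_2)=\ZZ\alpha$, $e\le e_1+e_2$ because $\mathrm{cone}(S)=\mathrm{cone}(S_1)+\mathrm{cone}(S_2)$, and a rank-one piece is absorbed into the other cone through $\alpha$. The one thing to trim is your fourth paragraph: no ``sharpening'' is needed when $d_1,d_2\ge2$, since the naive sum already gives $e\le 2d-2$, and its claim about the ray through $\alpha$ is not meaningful as stated.
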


\begin{proposition}\label{prop:allk}
For every $k\ge3$, the semigroup of $(S^1/\ZZ_2)^k$ is not a complete intersection.
\end{proposition}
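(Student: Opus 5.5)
The plan is to apply Theorem~\ref{thm:cor34} exactly as in the tripod proof: count the extreme rays of the cone of $S$, compare with $2\dim S-2$, and observe that the count wins for every $k\ge3$.

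\textbf{Step 1: the counts.} By \eqref{eq:kcount} there are $n=4^k$ generators and $\dim S=3^k$. Here $\dim S$ is the dimension of the semigroup, which is the rank of the generator matrix. This is the number of faces of the boundary complex of the $k$-dimensional cross-polytope, the empty face included. It is also the number of monomials $\prod_{i\in A}\epsilon_{i}$ in the Fourier description of the binary hierarchical model. I will take this value as given, since the paper states it.

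\textbf{Step 2: every generator spans an extreme ray.} A label is a choice of sign at each of the $2k$ reflections, so $k$ pairs of signs, one pair per circle. Its local datum contains the indicator, at each of the $2^k$ fixed points (the transversals), of which joint sign pattern occurs there. In particular, the block belonging to any one fixed point is a $0/1$ vector with exactly one $1$ among the $2^k$ patterns.

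Take the coordinates indexed by a pair of antipodal fixed points, for instance $(P_0^{(1)},\dots,P_0^{(k)})$ and $(P_1^{(1)},\dots,P_1^{(k)})$. Together these two fixed points see all $2k$ reflections, so their two blocks determine the label completely. Hence each generator is a distinct vertex of a product of simplices restricted to these coordinates. It is therefore a vertex of the convex hull of all the generators, all of which lie in the hyperplane $w=1$.

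Since every generator has weight one, the cone over this polytope has one extreme ray per vertex. So the cone has exactly $4^k$ extreme rays. This is the same argument the tripod proof cites.

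\textbf{Step 3: the inequality.} Theorem~\ref{thm:cor34} applies because $d=3^k\ge2$. It would require $4^k\le 2\cdot3^k-2$. At $k=3$ this fails, since $64>52$. The ratio $4^k/3^k$ is increasing, so $4^k>2\cdot 3^k>2\cdot3^k-2$ as soon as $(4/3)^k>2$, which holds for all $k\ge3$ because $(4/3)^3=64/27>2$. Hence $S$ is not a complete intersection for any $k\ge3$.

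At $k=2$ the bound reads $16\le16$. It is silent there, which is consistent with the wheel case being decided separately, by degree-$4$ generators.

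\textbf{The main obstacle} is Step 2: making rigorous that every label is a vertex. The clean route is to note that the map sending a label to its local data is injective. This can be checked on the two antipodal blocks, or just on the singleton-count coordinates, which record all $2k$ signs. Every point is $0/1$, and a $0/1$ point of a $0/1$ polytope lying in a common sphere is always a vertex. Concretely, all generators have the same number of $1$'s in each block, so they lie on a sphere centred at the barycentre of the cube, and a finite set of distinct points on a sphere are all vertices of their hull.

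That observation settles extremality uniformly in $k$, with no case analysis.
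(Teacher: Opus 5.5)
Your proof is correct and is essentially the paper's. It uses the same counts $n=4^k$ and $d=3^k$, the same observation that each generator is a distinct $0/1$ vertex of a product of fixed-point simplices and so spans an extreme ray of the cone over the weight-one hyperplane, and the same failure of $4^k\le 2\cdot 3^k-2$ because $(4/3)^3>2$. Your projection onto two antipodal blocks (or the sphere argument) only makes explicit the distinctness of the $4^k$ generators, which the paper takes for granted.
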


\begin{proof}
Every generator is an extreme ray. A generator is a $0/1$ vector with exactly one $1$ in each of the
$2^k$ fixed-point blocks, so the generators are \emph{a subset of} the vertex set of the product of
the fixed-point simplices --- a proper subset, since the product orbifold correlates the data at
different fixed points and only $4^k$ of those vertices occur. That is enough: a vertex of a
polytope remains extreme in the convex hull of any subset of the vertices containing it, and all the
generators lie in one weight hyperplane, so each of them spans an extreme ray of the cone. With
$n=4^k$ and $d=3^k$ from~\eqref{eq:kcount}, Theorem~\ref{thm:cor34} requires $4^k\le2\cdot3^k-2$,
and $(4/3)^3>2$.
\end{proof}

\begin{figure}[h]
\centering
\includegraphics[width=0.92\textwidth]{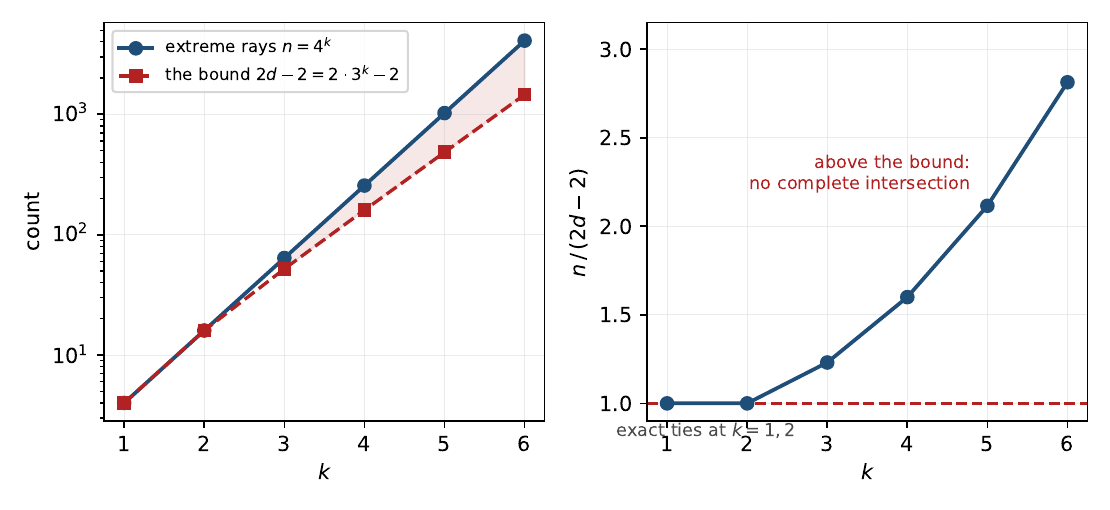}
\caption{Proposition~\ref{prop:allk}, drawn: the extreme rays against the bound, and their ratio.
The ratio is exactly $1$ at $k=1,2$ --- the bound is met, not approached --- and exceeds $1$ from
$k=3$. The ties are why $k=2$ needed the exhaustive sweep, and why no inequality of this shape can
separate a complete intersection from something meeting its bound exactly.}
\label{fig:fms}
\end{figure}

\paragraph{And it is not a cut ideal either.} A cut ideal with $4^k$ coordinates needs a graph on
$n$ vertices with $2^{\,n-1}=4^k$ cuts, so $n=2k+1$, and a marginal space of dimension $3^k$, so
$3^k-1$ edges. A simple graph on $2k+1$ vertices has at most $\binom{2k+1}{2}=k(2k+1)$ edges, so the
requirement is
\begin{equation}\label{eq:edges}
\boxed{\;3^k-1\;>\;\binom{2k+1}{2}=k(2k+1)\qquad(k\ge3),\;}
\end{equation}
which holds at $k=3$ --- $26$ edges wanted on $7$ vertices, which carry at most $21$ --- and
propagates: if $3^k-1>k(2k+1)$ then
\[
3^{k+1}-1\;=\;3(3^k-1)+2\;>\;3k(2k+1)+2\;=\;6k^2+3k+2\;>\;2k^2+5k+3\;=\;(k+1)(2k+3),
\]
the last step because $4k^2-2k-1>0$ for $k\ge1$. So no graph has the right number of edges for
any $k\ge3$. The binary hierarchical description of \S\ref{sec:dictionary} survives; the graph
one does not, which is the precise sense in which $k=2$ is the last case covered
by~\cite{SScuts}.

\begin{ficha}{The three regimes of the product family}
\noindent
The two results above and the exceptional case of \S\ref{sec:exception} are one statement about
$(S^1/\ZZ_2)^k$ read at three values of $k$. Writing $S_k$ for its semigroup, $n=4^k$ and
$d=3^k$ throughout:

\smallskip
\centering\small
\begin{tabular}{lccc}
\toprule
\rowcolor{hdrblue}
& \textcolor{white}{$k=1$} & \textcolor{white}{$k=2$} & \textcolor{white}{$k\ge3$}\\
\midrule
the cut configuration of a graph & yes, path $P_3$ & yes, wheel $W_4$ &
\textbf{no} \ \eqref{eq:edges}\\
\rowcolor{softgrey}
complete intersection & yes, degrees $[2]$ & \textbf{no} & \textbf{no} \ (Prop.~\ref{prop:allk})\\
normal, Cohen--Macaulay & yes & yes~\cite{SScuts} & \emph{open}\\
\rowcolor{softgrey}
Gorenstein & yes (type $1$) & no (type $9$) & ---\\
the framework that still describes it & graph & graph & binary hierarchical\\
\bottomrule
\end{tabular}

\smallskip\raggedright\noindent
The two frontiers do not coincide, and that is the content of the row pair: $k=2$ is still a cut
model and is already not a complete intersection, so the graph description survives one step longer
than the complete-intersection property does. What replaces the graph past $k=2$ is the binary
hierarchical model of the boundary complex of the $k$-dimensional cross-polytope, which is the
$k$-fold join of a pair of points --- and that complex, not the graph, is where the next question
lives.
\end{ficha}

\paragraph{The question this leaves open.} Whether $S_k$ is normal for $k\ge3$ is not settled here.
It matters more than the negative results above: normal affine semigroup rings are Cohen--Macaulay
by Hochster's theorem, so a proof of normality for all $k$ would exhibit an explicit infinite family
of \emph{normal, Cohen--Macaulay, non-complete-intersection} toric rings, indexed by $k$ and coming
from products of orbifolds --- which is a stronger statement than any single ``not a complete
intersection''. The configuration is $A_k=A_1^{\otimes k}$ with $A_1$ the $2\times2$ independence
design, and the two cases in hand are normal, but neither the tensor structure nor the join
structure of the complex is known to us to force it.

\paragraph{The exact ties.} The pair $(n,d)=(2r+2,r+2)$ at $k=1,2$ is the numerology of Example 3.5
of~\cite{FMS}, whose matrix carries two $+1$ and two $-1$ in every row. Reconstructing that example
for $r=1,\dots,4$ returns $2r+2$ extreme rays in dimension $r+2$, meeting the bound exactly, with the
count certified in exact arithmetic. Their example is a complete intersection while $(S^1/\ZZ_2)^2$
is not, with the same $n$, $d$ and rank: so the property is not determined by $(n,d,\operatorname{rank})$,
nor by the coarse sign pattern the two matrices share. What does determine it --- support, circuits,
the oriented matroid --- this pair does not say. The authors say as much about the state of the art~\cite[p.~3144]{FMS}: \emph{``finding
good criteria for establishing that $S$ is not a complete intersection remains an interesting open
problem.''}

\section{The exception, exhaustively}\label{sec:exception}

For completeness, and because it is the case the identification of \S\ref{sec:dictionary} was found
from, here is the direct argument for $k=2$, independent of Table 1 of~\cite{SScuts}.

\begin{proposition}\label{prop:notci}
The semigroup of $S^1/\ZZ_2\times S^1/\ZZ_2$ is not a complete intersection.
\end{proposition}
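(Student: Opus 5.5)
The extreme-ray bound of Theorem~\ref{thm:cor34} is tight here ($n=16=2\cdot9-2$), so it cannot decide this case. Instead I would argue directly from the minimal presentation: a toric ideal is a complete intersection exactly when it is minimally generated by $\operatorname{rank}\mathcal L=7$ binomials. The ideal is homogeneous in the weight grading, so every minimal generating set has the same cardinality, namely the number of minimal Markov moves. It therefore suffices to show that a minimal Markov basis of $S$ has more than $7$ elements. The target is the count $8+8=16$: eight quadrics and eight quartics.

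The plan has three steps. First, I fix the generators as the sixteen sign vectors $(p_0,p_1,q_0,q_1)\in\{\pm\}^4$, embedded by $N$, the four single counts and the four correlations $\langle P_iQ_j\rangle$, and compute $\mathcal L$ in exact arithmetic, confirming that its rank is $7$. Second, for each degree $2,3,4$ I enumerate the fibres of the marginal map in that degree. In each fibre I build the graph whose vertices are the multisets in the fibre and whose edges join two multisets that share a label, or that are connected by moves of lower degree already retained. The number of connected components minus one counts the minimal generators needed in that degree. In degree $2$ this gives $8$; I expect these to be the exchanges within each of the eight correlation-preserving pairs. In degree $3$ it gives $0$, and in degree $4$ it gives $8$. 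Third, I check that the moves found so far already connect every fibre up to a degree bound. Gr\"obner degree bounds would do here, or simply the degree-$4$ result cited from~\cite{SScuts}, although the point of this section is independence from that table. In fact the third step is not logically needed: the lower bound $8+8>7$ already suffices. Even the degree-$2$ count alone need not be exceeded. It is enough that degree $4$ requires at least one new generator beyond the $8$ quadrics, since $8>7$ already.

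This gives a cleaner minimal argument. Exhibit eight quadrics that are indispensable, that is, fibres of degree $2$ with exactly two elements and no common label. Then the minimal number of generators is at least $8>7=\operatorname{codim}$, and $S$ is not a complete intersection. I would find these explicitly. Two pairs of labels have equal marginals exactly when they agree on $N$, on the single counts and on the correlations. A natural candidate is $\{(+,+,+,+),(-,-,-,-)\}$ against $\{(+,-,+,-),(-,+,-,+)\}$, together with its images under the symmetry group of $W_4$ and the sign flips. Checking this candidate is a finite computation: there are only $\binom{16+1}{2}$ degree-$2$ multisets, so enumerating the fibres and reading off the two-element ones is routine.

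The main obstacle is confirming that these fibres are genuinely of size two, and that the resulting eight moves are independent as minimal generators rather than merely distinct. Degree $2$ is the lowest possible degree, so no lower-degree move can connect such a fibre. Each of them is therefore indispensable, and this also settles independence. As a cross-check I would compare against the Hilbert series of Table~\ref{tab:family}. A complete intersection with seven relations of degrees $d_i$ would have numerator $\prod(1-x^{d_i})$ over $(1-x)^{16}$, giving $h^*$ as $\prod(1+\dots+x^{d_i-1})$. This would require $h^*_1=\#\{d_i\ge2\}\le7$ together with a symmetric $h^*$-vector. But $(1,7,20,28,7,1)$ is not palindromic, so it contradicts complete intersection independently, since complete intersections are Gorenstein.
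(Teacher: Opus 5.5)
Your argument is correct, and it takes a different route from the paper. The paper uses the Fischer--Morris--Shapiro gluing criterion. The relation lattice has rank $7$, so $S$ is not free. Hence $S$ is a complete intersection if and only if some bipartition of the sixteen generators is a gluing, and an exhaustive exact test of all $2^{15}-1=32767$ bipartitions finds none.

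You instead count generators. A homogeneous toric ideal of height $7$ is a complete intersection if and only if $\mu(I)=7$, and $\mu(I)\ge\dim_k I_2$ because $I$ contains no linear forms. That lower bound needs neither fibre graphs nor indispensability: $\dim_k I_2=\binom{17}{2}-|S_2|=136-128=8>7$, reading $|S_2|=128$ off the Hilbert series. The fibre enumeration in degrees $3$ and $4$ is therefore more than the statement requires.

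Your cross-check is also a complete proof on its own. A complete intersection with relation degrees $d_1,\dots,d_7\ge2$ has numerator $\prod_i(1+x+\dots+x^{d_i-1})$ over $(1-x)^9$. That polynomial is palindromic, has degree $\sum(d_i-1)\ge7$, and has value $\prod d_i\ge128$ at $x=1$. The actual numerator $(1,7,20,28,7,1)$ fails all three: it is not symmetric, has degree $5$, and sums to $64$.

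Your route gives a certificate a reader can check by hand, in place of a $32767$-case sweep. The paper's route stays uniform with the gluing search that produces the positive rows of Table~\ref{tab:gluings}. It also yields finer information: $16$ bipartitions pass the lattice condition and fail only on $\alpha$. The paper does record both of your ingredients --- the $8+8$ Markov basis against rank $7$, and the asymmetric numerator --- but only as corroboration after the proof.

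One concrete error: your explicit candidate is not a move. In coordinates $(p_0,p_1,q_0,q_1)$, the pair $\{(+,+,+,+),(-,-,-,-)\}$ has all four correlations $\langle P_iQ_j\rangle=2$. The pair $\{(+,-,+,-),(-,+,-,+)\}$ has correlations $(2,-2,-2,2)$.

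In fact $\{(+,+,+,+),(-,-,-,-)\}$ is alone in its fibre. Zero single counts force the pair to be complementary, $\{u,-u\}$, and all correlations equal to $+2$ force $u$ to be constant. So neither this pair nor any of its images under the symmetry group gives a quadric.

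The eight quadrics are the exchanges within one circle at fixed parities of the other:
\[
\{(+,+,q_0,q_1),(-,-,q_0,q_1)\}\leftrightarrow\{(+,-,q_0,q_1),(-,+,q_0,q_1)\},
\]
together with the same moves with the roles of $p$ and $q$ exchanged. That is four choices of the fixed pair times two circles. Each of these fibres has exactly two elements, so your indispensability argument applies to them.

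Your enumeration would have found these, so the proof's structure is unaffected. You should, however, drop the sentence requiring degree $4$ to add a generator ``since $8>7$ already''. Once $8>7$ is established, degree $4$ plays no role.
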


\begin{proof}
It is not free: its relation lattice has rank $7$. By~\cite{FMS} it is then a complete intersection
if and only if some partition of its minimal generating set into two non-empty parts is a gluing.
There are $2^{15}-1=32767$ such partitions; testing each exactly, $16$ survive the rank condition on
the lattice intersection and none of those has an admissible $\alpha$.
\end{proof}

Two independent measurements agree, and both match~\cite{SScuts}. The minimal Markov basis has $16$
elements --- eight of degree $2$ and eight of degree $4$ --- against a relation lattice of rank $7$.

\paragraph{The $h^*$-vector.} Writing the class count over $(1-x)^9$ gives $(1,7,20,28,7,1)$. That
numerator is an $h^*$-vector and not merely a Hilbert numerator because the semigroup is normal.
Write $L=\ZZ A$ for the group the configuration generates and $L_N$ for its degree-$N$ slice.
Table 1 of~\cite{SScuts} records the cut ring of $\widehat{C_4}=W_4$ as normal and Cohen--Macaulay,
that is $S=\mathrm{cone}(A)\cap L$; since every generator has degree one this gives
$S_N=NP\cap L_N$, so every lattice point of $NP$ in $L$ is a sum of $N$ generators, and the
Hilbert series is the Ehrhart series of $\mathrm{Cut}^\square(W_4)$ with respect to $L$, whose
normalised volume is therefore $64$.

The count generated by the degree-$2$ moves alone gives $(1,7,20,28,15,-7)$ over the same
denominator, and there the argument is negative and needs no such hypothesis: by Stanley's
theorem~\cite{Stanley} a lattice polytope has a non-negative $h^*$-vector, so that series is not the
Ehrhart series of one. It is visible from the shape of the generating function alone, with no
lattice arithmetic.

\paragraph{What the eight quartics are.} Two eights occur in this case and they are the same eight.
The published quadratic relations give $2817$ classes at rank $4$ where the fibres number $2809$, an
excess of $8$; and the minimal Markov basis has $8$ generators of degree $4$. Neither number
explains the other on its face --- a Markov generator is defined by the connectivity of a fibre
under sharing a variable, and the excess is defined by what a particular published list of moves
fails to connect --- so we computed both.

\begin{proposition}\label{prop:eight}
On $S^1/\ZZ_2\times S^1/\ZZ_2$ at rank $4$, exactly eight fibres of the local-datum map are not
connected by the degree-$2$ relations. Each of those fibres contains exactly \emph{two} boundary
conditions, and they are the only fibres of rank $4$ with more than one component. The
corresponding eight quartic binomials are therefore \emph{indispensable}: each occurs, up to
sign, in every Markov basis of the ideal. So are the eight quadrics, for the same reason at rank
$2$; hence
\[
\begin{gathered}
\text{the toric ideal has a \emph{unique} minimal binomial generating set, up to signs:}\\[2pt]
8\ \text{quadrics}\ +\ 8\ \text{quartics.}
\end{gathered}
\]
Under $\mathcal G$ the sixteen fall into exactly two orbits, one per degree.
\end{proposition}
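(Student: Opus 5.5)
We would establish the statement by working directly with the $16$ generators, which are the $16$ cuts of $W_4$, or equivalently the sign vectors $(p_0,p_1,q_0,q_1)\in\{\pm1\}^4$. Their local datum is $N$, the four single counts and the four correlations $\langle P_iQ_j\rangle$. The tool is the standard criterion of Charalambous--Katsabekis--Thoma for indispensability. A binomial $x^u-x^v$ is indispensable if and only if its fibre $\{u,v\}$ is such that the graph on the fibre, with edges between monomials sharing a variable or joined by lower-degree moves, is disconnected with exactly two components, each a single point. Concretely, it suffices to show that the fibre has exactly two elements and that these share no variable. Then no move of smaller degree can act on either element, and the binomial must appear up to sign in every Markov basis.

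The first step is rank $2$. We enumerate the fibres of the degree-$2$ part, which is $\binom{17}{2}=136$ multisets. We expect eight fibres of size two with disjoint supports; these give the eight quadrics. Every other degree-$2$ fibre should be a singleton, and then those quadrics are indispensable.

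The second step is rank $4$. We enumerate all $\binom{19}{4}=3876$ multisets of size $4$ and bucket them by local datum. The count of buckets should come out to $2809$, which matches the Hilbert series coefficient and serves as a control. Inside each bucket we build the graph whose edges are applications of the eight quadrics, that is, replacing a sub-pair by the other element of its quadric fibre. We then compute connected components.

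The claim to verify is that exactly eight buckets are disconnected. Each of these should contain exactly two elements with disjoint supports: two boundary conditions, each a multiset of four labels, with no label in common. Since the quadrics act within a fibre, a two-element fibre with disjoint supports admits no quadric move at all. Its binomial is therefore a fibre-binomial of the type required by the criterion, hence indispensable.

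Before the exhaustive count, we would try to obtain the disconnected fibres conceptually. Take the two "checkerboard" quartets of $W_4$ cuts, which realise the $K_4$ minor of \cite[Cor.~3.3(2)]{SScuts}. Acting by the symmetry group $\mathcal G$, meaning the sign flips of each reflection together with the dihedral symmetries of $C_4$, should then produce all eight.

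Given both indispensable families, uniqueness follows quickly. The ideal is known to have a minimal Markov basis of $8+8$ elements (from \S\ref{sec:known}, or from Engstr\"om plus the count). Every Markov basis contains the sixteen indispensable binomials, so any minimal one consists of exactly these. We also need that no fibre in degrees $3$, $5$, or higher requires a generator. This is already contained in the Markov-basis count of \S\ref{sec:known}, and we would cite it rather than re-prove it.

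For the orbit statement, we generate $\mathcal G$ as the permutations of the coordinates of the sign vectors preserving the local-datum structure. This group includes flips $p_i\mapsto -p_i$ (after recoding, the swap of parities), the swap $P\leftrightarrow Q$, and the swaps $P_0\leftrightarrow P_1$ and $Q_0\leftrightarrow Q_1$. We apply it to one quadric and one quartic and check that each orbit has size eight. Since $\mathcal G$ preserves degree, the two degrees cannot mix, so there are exactly two orbits.

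The main obstacle is the rank-$4$ component analysis. The part that needs care is arguing that the remaining $2801$ fibres are connected. That is a finite computation, but it is not one we can see by hand. We would trust the exhaustive search for this step, cross-checked by the identity $2817-2809=8$: each disconnected two-element fibre contributes exactly one excess class under the quadric-only count, so the excess matches the number of such fibres exactly when every other fibre is connected.
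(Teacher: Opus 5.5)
Your proposal is correct and follows the paper's route. Both argue by exhaustive enumeration of the rank-$2$ and rank-$4$ fibres, deduce indispensability from two-element fibres with disjoint supports that no lower-degree move can connect, get uniqueness because every minimal Markov basis has $8+8$ elements and must contain all sixteen indispensables, and finish with an orbit computation under $\mathcal G$; the excess $2817-2809=8$ is the same cross-check the paper runs. One small correction: your alternative ``Engstr\"om plus the count'' does not by itself rule out minimal generators in degree $3$ or degree $\ge5$, so for that step rely on Table~1 of \cite{SScuts} or on the computed minimal Markov basis, as you primarily do.
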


\noindent
Indispensability is what a fibre of exactly two disconnected monomials gives: if $F_b=\{u,v\}$ and
no binomial of lower degree connects them, then $x^u-x^v$ has nowhere else to come from, so every
Markov basis contains it up to sign. That is strictly stronger than being a member of \emph{some}
minimal Markov basis, and it is what turns ``a minimal Markov basis has $8+8$'' into ``the minimal
Markov basis is $8+8$''. The control that it is not simply ``the fibre has two elements'' is that
fibres of ranks $3$ and $4$ with several monomials, all connected, exist and correctly ask for
nothing.

\noindent
So the first failure of the quadratic relations is one phenomenon seen eight times: a single orbit
of \emph{pairs} of boundary conditions with equal local data that no quadratic move exchanges, one
quartic apiece. Up to symmetry the whole ideal is two relations, one of each degree, and the
second exists because of those pairs and for no other reason.

\paragraph{The group, exactly.} $\mathcal G\le S_{16}$ is generated by seven involutions on the
labels: the four \emph{switchings}, each flipping the parity under one of the four reflections;
the two swaps exchanging the two reflections within a circle; and the swap exchanging the two
circles. It is $\ZZ_2^4\rtimes\mathrm{Aut}(W_4)$ --- switching by a vertex subset, semidirect
the automorphisms of the wheel --- of order $16\cdot8=128$. It is not merely a subgroup: computing
the permutations of the sixteen labels that preserve the relation lattice $\ker A$ directly, by a
route that does not know the construction above, returns exactly those $128$. So
$\mathcal G=\mathrm{Aut}(A)$, and ``a single orbit'' is a statement about the full automorphism
group.

The same group governs the canonical module: its eight minimal generators of degree $6$
(Proposition~\ref{prop:type}) are one $\mathcal G$-orbit as well, so the whole of the exceptional
structure --- relations and canonical module both --- is carried by orbits of size eight and one.

\begin{table}[h]
\centering\small
\begin{tabular}{ll l}
\toprule
\rowcolor{hdrblue}
\textcolor{white}{invariant} & \textcolor{white}{value} & \textcolor{white}{source}\\
\midrule
vertices, dimension, facets & $16$, \ $8$, \ $24$ & computed here\\
\rowcolor{softgrey}
$f$-vector & $(1,16,104,360,712,816,520,168,24,1)$ & computed here\\
$h^*$-vector & $(1,7,20,28,7,1)$ & computed here\\
\rowcolor{softgrey}
normalised volume & $64$ & $\sum h^*_i$; also the degree in~\cite{SScuts}\\
minimal generators & $8$ quadrics, $8$ quartics & computed here; \cite{SScuts}, Table 1\\
\rowcolor{softgrey}
normal, Cohen--Macaulay & yes, yes & \cite{SScuts}, Table 1\\
Gorenstein & no --- Cohen--Macaulay \textbf{type} $9$ & computed here\\
\rowcolor{softgrey}
$\operatorname{codeg}P$, \ $a(k[S])$, \ $\operatorname{reg}k[S]$ & $4$, \ $-4$, \ $5$ &
computed here\\
complete intersection & \textbf{no} & Proposition~\ref{prop:notci}\\
\bottomrule
\end{tabular}
\caption{The exceptional case, as a list of invariants. It is a normal, Cohen--Macaulay toric ring
which is not a complete intersection --- which is the useful way to say what it is, rather than
saying only what it is not.}
\label{tab:invariants}
\end{table}

\paragraph{How far from Gorenstein.} ``Not Gorenstein'' is the least informative thing one can
say about a Cohen--Macaulay ring, because Gorenstein is the case $\mathrm{type}=1$ of a positive
integer. Since the semigroup is normal, Danilov--Stanley identifies the canonical module
$\omega$ with the ideal spanned by the interior lattice points of the cone, and the type is its
minimal number of generators.

\begin{proposition}\label{prop:type}
The semigroup ring of $S^1/\ZZ_2\times S^1/\ZZ_2$ has Cohen--Macaulay type $9$. Its canonical
module has one minimal generator in degree $4=\operatorname{codeg}P$ and eight in degree $6$, and
none in any other degree.
\end{proposition}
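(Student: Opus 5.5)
Since the ring is normal (Table~1 of~\cite{SScuts}, as used in \S\ref{sec:exception}), I would work entirely inside the Danilov--Stanley description: $\omega$ is the ideal of $k[S]$ spanned by the monomials of the lattice points of $L$ lying in the relative interior of $C=\mathrm{cone}(A)$. The type is the minimal number of generators of $\omega$, that is, the number of interior lattice points $u$ such that $u-a_i$ is not interior for every generator $a_i$. The plan has three steps. First, write down the $24$ facet inequalities of $\mathrm{Cut}^\square(W_4)$ (Table~\ref{tab:invariants}), as integral primitive functionals $\sigma_j$ on $L$. Interior points are then exactly those with $\sigma_j(u)\ge1$ for all $j$.

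Second, I would pin down the degrees before any search. From the $h^*$-vector $(1,7,20,28,7,1)$, Ehrhart reciprocity gives the interior series $\sum_N \#\mathrm{int}(NP)\cap L\,x^N = x^{9}H(S,1/x)\cdot(\pm1)$, with numerator $x^4+7x^5+28x^6+\dots$. So the first interior point occurs in degree $4=\operatorname{codeg}P$ and is unique. This is consistent with $a(k[S])=-4$. Because of the reversed $h^*$ pattern $(1,7,28,\dots)$, I expect it to be the $\mathcal G$-fixed point: the barycentre scaled by four, i.e.\ the sum of four generators forming an orbit-balanced quadruple. Multiplication by the $16$ generators then maps degree-$4$ interior points into degree $5$. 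I would check that these products span all $7$ expected interior points there, to the extent the $h^*$ count forces. The difference between $\dim \omega_6$ and the part generated from degree $4$ should leave exactly $8$ new minimal generators in degree $6$.

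Third, I would establish minimality and absence of generators in degrees $\ge7$. Minimality of the degree-$6$ points follows by exhibiting, for each candidate $u$, that every $u-a_i$ violates some facet inequality, i.e.\ hits $\sigma_j=0$. By $\mathcal G$-symmetry (Proposition~\ref{prop:eight}) this reduces to one orbit representative. Generation in degrees $\ge7$ is the step I expect to be the main obstacle, since it is an infinite statement. I would handle it with the standard bound: a minimal generator $u$ of $\omega$ can be written $u=\sum \lambda_i a_i$ with $0<\lambda_i<1$ over some simplex of a triangulation, so it lies in the open fundamental parallelepiped. Hence its degree is at most $d-1=8$. A finite enumeration of interior lattice points of degrees $7,8$ then suffices, each tested for having some $a_i$ with $u-a_i$ interior. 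Equivalently, one can compare Hilbert functions: the submodule generated by the $1+8$ found generators must have Hilbert series equal to that of $\omega$, read off by reciprocity. Equality of the two series in all degrees up to $8$ settles it. Summing gives type $1+8=9$.
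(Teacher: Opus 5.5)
Your framework is the paper's: normality plus Danilov--Stanley, interior membership decided against the $24$ facets, minimality tested only on $u-a_i$, and reciprocity for the interior counts. The paper enumerates degrees $4$ to $8$ and uses reciprocity only as a cross-check.

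\textbf{A slip in degree $5$.} The reciprocity formula is $(-1)^9H(S,1/x)=x^9h^*(1/x)/(1-x)^9$, not $x^9H(S,1/x)$. The number of interior points in degree $5$ is the coefficient $9\cdot 1+7=16$, not $7$. Once corrected, your low-degree counting is sharper than the paper's search. Write $u_4$ for the unique degree-$4$ interior point. The sixteen distinct points $u_4+a_i$ exhaust $\omega_5$. Hence every degree-$6$ interior point outside $u_4+S_2$ is automatically a minimal generator, and there are exactly $136-|S_2|=136-128=8$ of them. No facet test or symmetry reduction is needed there. Degrees $7$ and $8$ genuinely need the enumeration.

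\textbf{The degree bound rests on a false lemma.} This is the step you flagged yourself. Minimality does not give $0<\lambda_i<1$. Write $u$ in a simplicial cone $\sigma$ of a triangulation. Subtracting $(\lceil\lambda_i\rceil-1)a_i$ keeps $u$ in the relative interior of the same face of $\sigma$, hence in the interior of $C$. So minimality only forces $\lambda_i\in(0,1]$ on the support.

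A coefficient equal to $1$ cannot be removed this way, because $u-a_i$ then lies in a smaller face of $\sigma$, which may sit in $\partial C$. For $\NN^d$ the canonical module is generated by $\sum e_i$, with every $\lambda_i=1$ and degree $d$. So as written your argument gives degree $\le 9$, not $\le 8$.

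\textbf{The fix.} Degree $9$ forces $u=\sum_{a\in\sigma}a$ over a full nine-ray simplicial cone. Minimality then puts each $u-a$, and hence each facet of $\sigma$, in a supporting hyperplane of $C$. That gives $C\subseteq\sigma$, so $C=\sigma$, which is impossible since $C$ has sixteen extreme rays. Alternatively, enumerate degree $9$ as well.

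This correction matters beyond your proposal. The paper's search runs through degree $8$ without spelling out why nothing beyond is needed, and this bound is what licenses ``none in any other degree''.
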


\noindent
The criterion is exact and finite: $u\in\operatorname{relint}(S)$ is a minimal generator of
$\omega$ when $u-s\notin\operatorname{relint}(S)$ for every non-zero $s\in S$, and since $S$ is
generated in degree one it suffices to test $u-a_i$ over the sixteen generators. Interior
membership is decided against the $24$ facets of the cone, computed in exact rational arithmetic;
and because the semigroup is normal, its degree-$N$ part \emph{is} the set of lattice points of
$NP$, so those can be enumerated as sums of $N$ generators with no integer program anywhere.

Two counts appear here and they are different things. The interior points of the cone number
$1,16,136,752,3097$ in degrees $4$ to $8$; the minimal generators of $\omega$ number $1,0,8,0,0$.
There is no tension: the sixteen interior points of degree $5$ are the single degree-$4$ generator
plus each of the sixteen generators of $S$, so they are generated and not minimal. The type counts
only what addition by $S$ cannot reach. The interior counts are produced twice --- once by
enumeration and once as the coefficients of $(-1)^{d}H(1/x)$ --- and the two agree, so the
enumeration is of the right module. The asymmetry of $(1,7,20,28,7,1)$ says only that the type is
not $1$; it does not say that it is $9$.

\begin{figure}[h]
\centering
\includegraphics[width=0.78\textwidth]{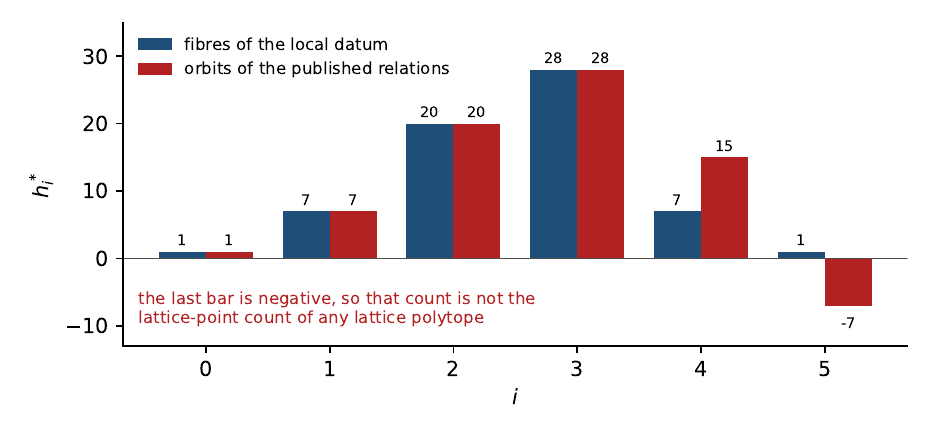}
\caption{The two counts of $S^1/\ZZ_2\times S^1/\ZZ_2$ written over $(1-x)^9$. The first four
coefficients agree; the last is negative on the right, and a lattice polytope cannot produce a
negative $h^*_i$. The two series are $1,16,128,688,\mathbf{2809}$ and
$1,16,128,688,\mathbf{2817}$.}
\label{fig:hstar}
\end{figure}

\section{The marginal polytopes}\label{sec:polytopes}

For the weight-one alphabets the generators lie in a hyperplane and their convex hull is a lattice
polytope of dimension $\dim S-1$, with every generator a vertex.

\begin{table}[h]
\centering\small
\begin{tabular}{lrrl}
\toprule
\rowcolor{hdrblue}
\textcolor{white}{orbifold} & \textcolor{white}{vertices} & \textcolor{white}{$\dim$} &
\textcolor{white}{combinatorial type}\\
\midrule
$S^1/\ZZ_2$ & $4$ & $2$ & $\Delta_1\oplus\Delta_1$\\
\rowcolor{softgrey}
$T^2/\ZZ_2$ & $8$ & $4$ & $\Delta_1^{\oplus4}$; also $\mathrm{Cut}^\square(C_4)$, as it must be\\
$T^2/\ZZ_3$ & $9$ & $6$ & $\Delta_2\oplus\Delta_2\oplus\Delta_2$\\
\rowcolor{softgrey}
$T^2/\ZZ_4$ (weight $1$) & $8$ & $6$ & $\Delta_3\oplus\Delta_3$\\
$T^2/\ZZ_6$ (weight $1$) & $6$ & $5$ & $\Delta_5$\\
\midrule
\rowcolor{softgrey}
$S^1/\ZZ_2\times S^1/\ZZ_2$ & $16$ & $8$ & $\mathrm{Cut}^\square(W_4)$\\
\bottomrule
\end{tabular}
\caption{The marginal polytopes. Above the line the type is a free sum of simplices; below it is a
cut polytope of a graph with a $K_4$ minor.}
\label{tab:polytopes}
\end{table}

The free sums are the polytope shadow of Table~\ref{tab:gluings}, and the correspondence is exact on
these five rows: the number of gluings is the number of free summands minus one, and each gluing
degree is the number of vertices of the summand it splits off. The reading is specific to the
unit-weight slice, and not because it fails elsewhere: once the alphabet carries weights the
generators no longer lie in one degree-one hyperplane and the convex hull is not the object to
compare with. The first weighted case, $T^2/\ZZ_3$ over $\so N$ with its gluing degree $2m$, is
where a naive extension would already have to be reconsidered.

\section{Ledger}\label{sec:ledger}
{\small
\begin{longtable}{p{6.2cm}p{4.2cm}p{4.2cm}}
\caption{What each result owes.}\label{tab:ledger}\\
\toprule
\rowcolor{hdrblue}
\textcolor{white}{Item} & \textcolor{white}{Inherited (cite)} & \textcolor{white}{Ours}\\
\midrule
\endfirsthead
\toprule
\rowcolor{hdrblue}
\textcolor{white}{Item} & \textcolor{white}{Inherited (cite)} & \textcolor{white}{Ours}\\
\midrule
\endhead

complete intersection $\iff$ iterated gluing; the extreme-ray bound &
Fischer--Morris--Shapiro~\cite{FMS}; Delorme~\cite{Delorme} in rank one & nothing\\
\midrule
\rowcolor{softgrey}
Hilbert series of a gluing~\eqref{eq:ago} & Assi--Garc\'ia-S\'anchez--Ojeda~\cite{AGO} & nothing\\
\midrule
cut ideals; the invariants of the cut ring of $W_4$ --- codimension, degree, minimal generators;
quadrics $\iff$ no $K_4$ minor &
Sturmfels--Sullivant~\cite{SScuts}; Engstr\"om~\cite{Engstrom} & nothing\\
\midrule
\rowcolor{softgrey}
normality and Cohen--Macaulayness of the exceptional case, on which the reading of the numerator as
an $h^*$-vector rests & Sturmfels--Sullivant~\cite{SScuts}, Table 1 & nothing\\
\midrule
group-based models, their codimension, and that most are not complete intersections &
Sturmfels--Sullivant~\cite{SSphylo}; Casanellas--Fern\'andez-S\'anchez--Micha{\l}ek~\cite{CFM} &
nothing\\
\midrule
\rowcolor{softgrey}
non-negativity of $h^*$ & Stanley~\cite{Stanley} & nothing\\
\midrule
\textbf{that orbifold boundary conditions form these semigroups, and which graph, tree or complex
each one is} & --- & \textbf{ours}\\
\midrule
\rowcolor{softgrey}
\textbf{that each weighted alphabet is the minimal generating set} & --- & \textbf{ours}\\
\midrule
\textbf{the gluing trees of the weighted rows, and of the orthogonal and symplectic alphabets} &
--- & \textbf{ours}\\
\midrule
\rowcolor{softgrey}
\textbf{that the tripod is a complete intersection exactly for $|G|\le3$, for every finite abelian
$G$} & --- & \textbf{ours}\\
\midrule
\textbf{that $(S^1/\ZZ_2)^k$ is not a complete intersection for any $k\ge3$, and is not a cut
ideal} & --- & \textbf{ours}\\
\midrule
\rowcolor{softgrey}
\textbf{the Cohen--Macaulay type, $a$-invariant and codegree of the exceptional case, and that
its minimal Markov basis is unique up to signs, sixteen indispensable binomials in two
orbits of eight under $\mathrm{Aut}(A)$} &
that it is normal and Cohen--Macaulay and not Gorenstein: \cite{SScuts}, Table 1 &
\textbf{ours} (the type $9$ itself, and the degrees its canonical module is generated in)\\
\bottomrule
\end{longtable}}

\appendix
\section{Reproducibility}\label{app:repro}
Every number regenerates from the ancillary scripts, each of which writes its own receipt and states
its own controls.

{\small
\begin{longtable}{p{5.4cm}p{9.4cm}}
\toprule
\rowcolor{hdrblue}
\textcolor{white}{script} & \textcolor{white}{what it establishes}\\
\midrule
\endfirsthead
\toprule
\rowcolor{hdrblue}
\textcolor{white}{script} & \textcolor{white}{what it establishes}\\
\midrule
\endhead

\texttt{bc\_preflight.py} & the alphabets, weights and local data, from the rotation matrix\\
\rowcolor{softgrey}
\texttt{cut\_ideal\_gate.py} & that the labels are the cuts of $P_3$, $C_4$ and $W_4$, both
directions\\
\texttt{gluing\_tree.py} & the ten unitary and orthogonal gluing trees; the exhaustive sweep of
the $32767$ partitions\\
\rowcolor{softgrey}
\texttt{tripod\_gate.py} & the tripod for every finite abelian group, with the codimension control\\
\texttt{sp\_gluing\_gate.py} & the three symplectic gluing trees: minimality, gluings, degrees\\
\rowcolor{softgrey}
\texttt{ledger\_gate.py} & that every script this table names is on disk with its receipt,
read out of this note's own source; and that $10+3=13$, with every row of
Table~\ref{tab:gluings} backed by exactly one of the two receipts and every receipt row used by
exactly one table row\\
\texttt{gorenstein\_gate.py} & the Cohen--Macaulay type, the $a$-invariant and $\operatorname{codeg}$
of the exceptional case, with a Gorenstein decoy\\
\rowcolor{softgrey}
\texttt{quartic\_orbit\_gate.py} & the eight split fibres, the eight degree-$4$ generators, and
that they are the same eight\\
\texttt{indispensable\_gate.py} & Proposition~\ref{prop:eight}: that all sixteen are
indispensable, that $\mathrm{Aut}(A)$ has order $128$, and the two binomial orbits together with
the one of the canonical generators\\
\rowcolor{softgrey}
\texttt{complete\_intersection.py} & minimal Markov basis against lattice rank\\
\texttt{fms\_cone\_gate.py} & extreme rays, $n=4^k$, $d=3^k$, and the control on Example 3.5\\
\rowcolor{softgrey}
\texttt{formula\_atlas.py} & the Hilbert series and the two $h^*$-vectors\\
\texttt{polytope\_id.py} & the combinatorial types of the marginal polytopes\\
\rowcolor{softgrey}
\texttt{minimality\_gate.py} & that every listed alphabet is the minimal generating set, with a decoy\\
\bottomrule
\end{longtable}}

\noindent
One receipt in \texttt{outputs/} has no row above, and deliberately: \texttt{figure\_table\_gate.txt}
is the run of a gate that reads \emph{both} notes' sources and checks that every sequence a figure
is drawn from is the sequence its table prints --- for this note, the two curves of
Figure~\ref{fig:fms} and the two bar sets of Figure~\ref{fig:hstar}. Its script needs the companion
note's source as well, so it travels with~\cite{IXA}; the receipt is carried here because it is this
note's figures that it certifies.

\medskip\noindent
Two controls are worth naming because they failed first and were repaired. The extreme-ray count
must be run on a semigroup and not on an arbitrary basis of a relation lattice, since an arbitrary
basis gives a cone containing a line; and a decoy must be a decoy --- the first cone written to test
the counter had two of its five generators inside the cone.



\begin{thebibliography}{99}

\bibitem{IXA} C.~Mar\'in, \emph{The Alphabet of Orbifold Boundary Conditions}, Part IX-A of this
series, Zenodo, \texttt{doi:10.5281/zenodo.22254861}.

\bibitem{SScuts} B.~Sturmfels, S.~Sullivant, \emph{Toric geometry of cuts and splits}, Michigan
Math.\ J.\ \textbf{57} (2008) 689--709, arXiv:math/0606683. 

\bibitem{Engstrom} A.~Engstr\"om, \emph{Cut ideals of $K_4$-minor free graphs are generated by
quadrics}, Michigan Math.\ J.\ \textbf{60} (2011) 705--714, arXiv:0805.1762. 

\bibitem{SSphylo} B.~Sturmfels, S.~Sullivant, \emph{Toric ideals of phylogenetic invariants},
J.\ Comput.\ Biol.\ \textbf{12} (2005) 204--228, arXiv:q-bio/0402015. 

\bibitem{CFM} M.~Casanellas, J.~Fern\'andez-S\'anchez, M.~Micha{\l}ek, \emph{Local description of
phylogenetic group-based models}, arXiv:1402.6945. 

\bibitem{MV} M.~Micha{\l}ek, E.~Ventura, \emph{Finite phylogenetic complexity of $\ZZ_p$ and
invariants for $\ZZ_3$}, J.\ Algebra (2017), arXiv:1508.04177.

\bibitem{MV2} M.~Micha{\l}ek, E.~Ventura, \emph{Phylogenetic complexity of the Kimura
3-parameter model}, arXiv:1704.02584.

\bibitem{FMS} K.~G.~Fischer, W.~Morris, J.~Shapiro, \emph{Affine semigroup rings that are complete
intersections}, Proc.\ Amer.\ Math.\ Soc.\ \textbf{125} (1997) 3137--3145. 

\bibitem{Delorme} C.~Delorme, \emph{Sous-mono\"\i des d'intersection compl\`ete de $N$}, Ann.\ Sci.\
\'Ec.\ Norm.\ Sup\'er.\ \textbf{9} (1976) 145--154. 

\bibitem{AGO} A.~Assi, P.~A.~Garc\'ia-S\'anchez, I.~Ojeda, \emph{Frobenius vectors, Hilbert series
and gluings of affine semigroups}, J.\ Commut.\ Algebra \textbf{7} (2015) 317--335. 

\bibitem{Stanley} R.~P.~Stanley, \emph{Decompositions of rational convex polytopes}, Ann.\ Discrete
Math.\ \textbf{6} (1980) 333--342.

\bibitem{HHK} N.~Haba, Y.~Hosotani, Y.~Kawamura, Prog.\ Theor.\ Phys.\ \textbf{111} (2004) 265.

\bibitem{KM} Y.~Kawamura, T.~Miura, Prog.\ Theor.\ Phys.\ \textbf{122} (2009) 847, arXiv:0905.4123.

\bibitem{TI2} K.~Takeuchi, T.~Inagaki, \emph{Classification of $T^2/\ZZ_m$ orbifold boundary
conditions}, arXiv:2404.19411. 

\bibitem{TI3} K.~Takeuchi, T.~Inagaki, PTEP \textbf{2025} 043B03, arXiv:2501.05849. 

\end{thebibliography}
\end{document}